\theoremstyle{plain}
\newtheorem{theorem}{Theorem}
\newtheorem*{theorem*}{Theorem}
\newtheorem{lemma}{Lemma}
\newtheorem*{lemma*}{Lemma}
\theoremstyle{definition}
\newtheorem*{definition*}{Definition}
\theoremstyle{remark}
\newtheorem{remark}{Remark}
\newtheorem*{remark*}{Remark}
\newtheorem{conjecture}{Conjecture}
\newtheorem*{statement*}{Statement}
\begin{document}
\title[Modificated Salem functions]{On one class of modifications of the Salem function}

\author{Symon Serbenyuk}

\subjclass[2010]{11K55, 11J72, 26A27, 11B34,  39B22, 39B72, 26A30, 11B34.}

\keywords{ Salem function, systems of functional equations,  complicated local structure}

\maketitle
\text{\emph{simon6@ukr.net}}\\
\text{\emph{Kharkiv National University of Internal Affairs, Ukraine }}
\begin{abstract}

The present article deals with properties of one class of functions with complicated local structure. These functions can be modeled by certain operators  of digits. Such operators were considered by the author earlier (for example, see \cite{Serbenyuk-2016, Symon2023} and references therein). This research is a generalization of investigations presented in the last-mentioned papers.
\end{abstract}

\section{Introduction}

One can begin with quoting  Henri Poincar\'e:

``Logic sometimes breeds monsters. For half a century there has been springing up a host of weird functions, which seem to strive to have as little resemblance as possible to honest functions that are of some use. No more continuity, or else continuity but no derivatives, etc. More than this, from the point of view of logic, it is these strange functions that are the most general; those that are met without being looked for no longer appear as more than a particular case, and they have only quite a little corner left them.

Formerly, when a new function was invented, it was in view of some practical end. To-day they are invented on purpose to show our ancestors' reasonings at fault, and we shall never get anything more than that out of them.

If logic were the teacher's only guide, he would have to begin with the most general, that is to say, with the most weird, functions. He would have to set the beginner to wrestle with this collection of monstrosities. If you don't do so, the logicians might say, you will only reach exactness by stages. ---  Henri Poincaré, Science and Method (1899), (1914 translation), page 125" (\cite{Wikipedia-pathology}).

``In mathematics, when a mathematical phenomenon runs counter to some intuition, then the phenomenon is sometimes called pathological" (\cite{Wikipedia-pathology}).  Really, in analysis some examples of  pathological objects exist. There are fractal sets (see \cite{Bunde1994, Falconer1997, Falconer2004, Mandelbrot1977, Mandelbrot1999, Moran1946, sets1, sets2, sets} and references therein), as well as functions with complicated local structure such as singular (for example, \cite{{Salem1943}, {Zamfirescu1981}, {Minkowski}, {S.Serbenyuk 2017}}),  nowhere monotonic \cite{Symon2017, Symon2019}, and nowhere differentiable functions  (for example, see \cite{{Bush1952}, {Serbenyuk-2016}}, etc.).  An interest in such functions can be explained by their  connection with modelling  real objects, processes, and phenomena (in physics, economics, technology, etc.) and with different areas of mathematics (for example, see~\cite{BK2000, ACFS2011, Kruppel2009, OSS1995,    Symon21, Symon21-1, Sumi2009, Takayasu1984, TAS1993, Symon2021}). ``Since Poincar\'e, nowhere differentiable functions have been shown to appear in basic physical and biological processes such as Brownian motion and in applications such as the Black-Scholes model in finance" (\cite{Wikipedia-pathology}).

As noted in \cite{Symon2023}, researchers are trying to find simpler examples of  functions with complicated local structure. For example (see \cite{S. Serbenyuk systemy rivnyan 2-2} and references therein), in 1830, the first example of a continuous  non-differentiable  function was modeled by Bolzano in  ``Doctrine on Function" but the last paper was published one   hundred years later.  Brief historical remarks on  functions with complicated local structure are given in~\cite{{ACFS2017}, {S. Serbenyuk systemy rivnyan 2-2}}.  In the paper \cite{Salem1943}, Salem introduced the following one of the simplest examples of singular functions:
$$
S(x)=S\left(\Delta^q _{i_1i_2...i_k...}\right)=\beta_{i_1}+ \sum^{\infty} _{k=2} {\left(\beta_{i_k}\prod^{k-1} _{r=1}{p_{i_r}}\right)}=y=\Delta^{P_q} _{i_1i_2...i_k...},
$$
where $q>1$ is a fixed positive integer, $p_j>0$ for all $j=\overline{0,q-1}$, and $p_0+p_1\dots + p_{q-1}=1$. Here
$$
\Delta^q _{i_1i_2...i_k...}:=\sum^{\infty} _{k=1}{\frac{i_k}{q^k}}, ~~~i_k\in\{0, 1, 2, 3, \dots , q-1 \}.
$$
Let us note that an arbitrary value (any number from the closed interval $[0,1]$) of the Salem function can be represented by the number notation $\Delta^{P_q} _{i_1i_2...i_k...}$ for a fixed positive integer $q>1$. The last representation is called \emph{the $P_q$-representation of $x$}. In this paper, the main attention is given to functions whose arguments and values represented by this representation.

Finally, one can remark that generalizations of the Salem function can be singular, non-differentiable functions,  or those that do not have a derivative on a certain set. There exist  a number of investigations which are devoted to the Salem function and its generalizations or modifications in terms of various representations of an argument (for example, see \cite{ACFS2017, Kawamura2010, Symon2015, Symon2017, Symon2019, Symon2021, Symon2023} and references in these papers). 

This paper is devoted to some modifications of the Salem function. The present technique of modifications was introduced in 2012 in  \cite{{S. Serbenyuk abstract 6}, Symon12(2)} in terms of the ternary representation, was considered in \cite{{S. Serbenyuk abstract 7}, {S. Serbenyuk abstract 8}, {Serbenyuk-2016}, {S. Serbenyuk systemy rivnyan 2-2}} for constructing functions  in terms of the $q$-ary and nega-$q$-ary representation, as well as in~\cite{Symon2023} was  investigated for modelling functions with complicated local structure in terms of representations of numbers by the Salem function with $q=3$.

The present research is a  generalization of~\cite{Symon2023}, \cite{Serbenyuk-2016} and  includes the consideration of  self-similar, fractal, integral, and differential, etc. properties of modifications of  the Salem function for an arbitrary positive integer $q>1$.

\section{The main object}

Suppose $q>1$ is a fixed positive integer,  as well as $P_q :=\{p_0, p_1, \dots, p_{q-1}\}$ is a fixed sets of numbers for which the conditions $p_j>0$ for all $j=\overline{0,q-1}$ and $p_0+p_1\dots + p_{q-1}=1$ hold. Then 
$$
\Delta^{P_q} _{i_1i_2...i_k...}:=\beta_{i_1}+ \sum^{\infty} _{k=2} {\left(\beta_{i_k}\prod^{k-1} _{r=1}{p_{i_r}}\right)}
$$
is \emph{the $P_q$-representation of $x\in[0, 1]$}.

Let us consider a class $S_q$ of modifications of Salem's functions of the form:
\begin{equation}
\label{ff1}
x=\Delta^{P_q} _{i_{1}i_{2}...i_{k}...}\stackrel{f_l}{\rightarrow} \Delta^{P_q} _{\theta_l(i_{1})\theta_l(i_{2})...\theta_l(i_{k})...}=f_l(x)=y,
\end{equation}
where the argument and values of the function are represented in terms of the $P_q$-representation, $\theta_l$ is a certain fixed permutation of digits $0, 1, \dots , q-1$, and 
values of the function are obtained from the $P_q$-representation of the argument by the change $\theta_l$ of digits. That is, there exist $q!$ different operators $\theta_l$ of digits. 

For the simplification, assume that  the function $f_1 (x)$ is the function  $y=x$ (i.e., this function is obtained by the following change of digits: $0$ by $0$,  $1$ by $1$, ..., $q-1$ by $q-1$) and the function $f_{q!} (x)$ is the function $y=\Delta^{P_q} _{[q-1-i_1][q-1-i_2]...[q-1-i_k]...}$ (this function is obtained by the following change of digits: $0$ by $q-1$,  $1$ by $q-2$, ..., $q-1$ by $0$), i.e.,  values of the function $f_l$  are obtained from the $P_q$-representation of the argument by the following change of digits: $0$ by $\theta_l(0)$,  $1$ by $\theta_l(1)$, ..., $q-1$ by $\theta_l(q-1)$, where $\theta_l(i)\ne\theta_(j)$ for any $i\ne j$. 

For the simplification of notations, for any $f\in S_q$ for  fixed $\theta_l$ and $l\in \{1, 2, 3, \dots , q!\}$ assume that
\begin{equation}
\label{ff2}
x=\Delta^{P_q} _{i_{1}i_{2}...i_{k}...}\stackrel{f}{\rightarrow} \Delta^{P_q} _{\theta(i_{1})\theta(i_{2})...\theta(i_{k})...}=\beta_{\theta(i_1)}+ \sum^{\infty} _{k=2} {\left(\beta_{\theta(i_k)}\prod^{k-1} _{r=1}{p_{\theta(i_r)}}\right)}=f(x)=y,
\end{equation}

Let us note that numbers of the form
$$
\Delta^{P_q} _{i_{1}i_{2}...i_{m-1}i_m000...}:=\Delta^{P_q} _{i_{1}i_{2}...i_{m-1}i_m(0)}=\Delta^{P_q} _{i_{1}i_{2}...i_{m-1}[i_m-1](q-1)}:=\Delta^{P_q} _{i_{1}i_{2}...i_{m-1}[i_m-1][q-1][q-1][q-1]...},
$$
where $i_m\ne 0$, are called \emph{$P_3$-rational}. The rest of numbers are called \emph{$P_3$-irrational}.

\begin{lemma}
For any function  $f$  from $S_{q}$ except for $f_1$ and $f_{q!}$, values of function $f$ for different representations of $P_3$-rational numbers from ~$[0;1]$  are different.
\end{lemma}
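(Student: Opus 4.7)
The plan is to take an arbitrary $P_q$-rational $x$, write its two canonical $P_q$-representations, apply $f$ digit-by-digit to each, and show the two resulting images differ whenever the underlying permutation $\theta$ is neither the identity nor the full reversal $j\mapsto q-1-j$.

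First, I fix $x = \Delta^{P_q}_{i_1\ldots i_{m-1} i_m(0)} = \Delta^{P_q}_{i_1\ldots i_{m-1}[i_m-1](q-1)}$ with $i_m\ne 0$ and use (\ref{ff2}) to write the images $y_1 = \Delta^{P_q}_{\theta(i_1)\ldots\theta(i_m)(\theta(0))}$ and $y_2 = \Delta^{P_q}_{\theta(i_1)\ldots\theta(i_{m-1})\theta(i_m-1)(\theta(q-1))}$. Since the first $m-1$ digits coincide, their contribution to the defining sum is identical and factors out with a positive common weight $W := \prod_{r=1}^{m-1} p_{\theta(i_r)} > 0$. Summing the geometric tails, $y_1 = y_2$ reduces to the scalar equation $\beta_{\theta(i_m)} + p_{\theta(i_m)}\beta_{\theta(0)}/(1-p_{\theta(0)}) = \beta_{\theta(i_m-1)} + p_{\theta(i_m-1)}\beta_{\theta(q-1)}/(1-p_{\theta(q-1)})$.

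The core step is a contradiction argument using the essential uniqueness of the $P_q$-representation on $[0,1]$: the encoding is injective at $P_q$-irrational points and exactly two-to-one at $P_q$-rationals, with the two representations being the ``$(0)$-tail'' and the ``$(q-1)$-tail'' form. The digit sequences producing $y_1$ and $y_2$ already disagree at position $m$ (as $\theta(i_m)\ne \theta(i_m-1)$), so $y_1 = y_2$ forces them to be the two canonical expansions of a common $P_q$-rational. Matching the $(\theta(0))$-tail of $y_1$ with the ``$(0)$-tail form'' forces $\theta(0) = 0$, and matching the $(\theta(q-1))$-tail of $y_2$ with the ``$(q-1)$-tail form'' forces $\theta(q-1) = q-1$ (in the opposite pairing the roles swap, giving $\theta(0) = q-1$ and $\theta(q-1) = 0$). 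Aligning the $m$-th digit then imposes the neighbour relation $\theta(i_m) - \theta(i_m-1) = +1$ (respectively $-1$). As $i_m$ ranges over $\{1,\ldots,q-1\}$ and $\theta$ is determined from the anchor $\theta(0)$ by iterating this relation, $\theta$ must be the identity (respectively $j\mapsto q-1-j$), contradicting $f\ne f_1, f_{q!}$.

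The main obstacle is the uniqueness casework: I must verify that the hypothesis $p_j>0$ makes the cumulative weights $\beta_j$ strictly increasing, that the closed-form sums $\beta_0/(1-p_0) = 0$ and $\beta_{q-1}/(1-p_{q-1}) = 1$ are the only constant tails matching the ``$(0)$-tail'' and ``$(q-1)$-tail'' conventions, and that no accidental cancellation in the scalar identity above can occur outside these two rigid pairings. Once these are pinned down, the chain of implications forcing $\theta$ to be the identity or the full reversal follows cleanly, so no $P_q$-rational can have coinciding $f$-images under its two representations.
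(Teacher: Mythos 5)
Your argument is essentially the paper's own proof: write the two digit-wise images of the two canonical representations of a $P_q$-rational, cancel the common prefix weight, and observe that coincidence of the images forces $\theta(0),\theta(q-1)$ to be $0,q-1$ (or swapped) together with the neighbour relation $\theta(i_m)=\theta(i_m-1)\pm 1$, whence $\theta$ is the identity or the reversal $j\mapsto q-1-j$. The one caveat, which you share with the paper, is that a single fixed rational yields the neighbour relation only at its particular index $i_m$, so the final step ``$\theta$ is determined by iterating this relation from the anchor $\theta(0)$'' implicitly quantifies over all $P_q$-rationals (all $i_m\in\{1,\dots,q-1\}$) rather than the one you fixed at the outset.
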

\begin{proof}
Consider an $P_3$-rational number
$$
x_1=\Delta^{P_q} _{i_{1}i_{2}...i_{m-1}i_m(0)}=\Delta^{P_q} _{i_{1}i_{2}...i_{m-1}[i_m-1](q-1)}=x_2, ~~~~~i_m\ne 0.
$$ 
It is easy to see that
$$
f\left(x_1\right)=\Delta^{P_q} _{\theta(i_{1})\theta(i_{2})...\theta(i_{m-1})\theta(i_m)(\theta(0))}= \Delta^{P_q} _{\theta(i_{1})\theta(i_{2})...\theta(i_{m-1})\theta(i_m-1)(\theta(q-1))}=f\left(x_2\right)
$$
is true whenever
$$
\left\{
\begin{aligned}
\theta(i_m)=\theta(i_m-1)+1\\
\left[
\begin{aligned}
\left\{
\begin{aligned}
\theta(0) & =0\\
\theta(q-1) & =q-1
\end{aligned}
\right.\\ 
\left\{
\begin{aligned}
\theta(0) & =q-1\\
\theta(q-1) & =0
\end{aligned}
\right.\\
\end{aligned}
\right.\\
\end{aligned}
\right.\\
$$
holds. Hence $f(x)=x$ or $f(x)=f_{q!}(x)$.
\end{proof}

\begin{remark}
\label{Remark3}
To reach that the function $f$ be well-defined on the set of $P_q$-rational numbers from $[0, 1]$, we will not consider the $P_q$-representations, which have period $(q-1)$ (without the number~$1$).
\end{remark}

\begin{lemma}
Any function  $f\in S_{q}$ except for $f_1$ and $f_{q!}$,  has the following properties:
\begin{enumerate}
\item $f$ maps the closed interval $[0,1]$ into $[0,1]$ without a certain enumerable subset, i.e.,
$$
f: [0,1]\stackrel{f}{\rightarrow} [0,1] \setminus \left\{y: y=\Delta^{P_q} _{j_1j_2...j_k(\theta(q-1))}\right\};
$$
\item according to the existence $i\in \{0, 1, \dots q-1\}$ for which the equality $\theta(i)=i$ holds, the set of invariant points is an empty set, an one-element set, or self-similar fractal;  
\item the function $f$ is not bijective on the domain.
\item $f$ is not a monotonic function on the domain.
\end{enumerate}
\end{lemma}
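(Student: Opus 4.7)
The plan is to handle the four items in turn, using that $\theta$ is neither the identity nor the reversal, together with the canonical-representation convention of Remark~\ref{Remark3}.

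For item (1), I would argue that $y\in[0,1]$ belongs to $f([0,1])$ iff at least one $P_q$-representation $(j_k)$ of $y$ has the property that $(\theta^{-1}(j_k))$ is again canonical (does not end in the period $(q-1)$). A short case split, according to whether $y$ is $P_q$-rational (two representations, treated in the spirit of Lemma~1) or $P_q$-irrational (unique representation), shows that the numbers failing this criterion are precisely those whose $P_q$-expansion ends in the period $(\theta(q-1))$, which is the claimed countable excluded set. Item~(3) is then an immediate corollary: this excluded set is non-empty, so $f$ is not surjective onto $[0,1]$ and hence not bijective.

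For item (2), uniqueness of the canonical expansion turns $f(x)=x$ into the pointwise condition $\theta(i_k)=i_k$ for every $k$, so the invariant set coincides with the set of $x\in[0,1]$ whose canonical digits all lie in $\mathrm{Fix}(\theta)$. The three cases then follow: the set is empty when $\mathrm{Fix}(\theta)=\emptyset$; it reduces to the single point $\Delta^{P_q}_{(a)}$ when $\mathrm{Fix}(\theta)=\{a\}$; and when $|\mathrm{Fix}(\theta)|\ge 2$ it equals, up to a countable set of discarded period-$(q-1)$ expansions, the attractor of the iterated function system $\{g_i(t)=\beta_i+p_i t\}_{i\in\mathrm{Fix}(\theta)}$ and is therefore a self-similar fractal.

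For item (4), any permutation of $\{0,1,\dots,q-1\}$ that is neither identity nor reversal contains both an inversion $(i,j)$ with $i<j$, $\theta(i)>\theta(j)$, and a non-inversion $(i',j')$ with $i'<j'$, $\theta(i')<\theta(j')$. Since the first $P_q$-digit of $x$ pins $x$ to the first-level cylinder $[\beta_{i_1},\beta_{i_1+1}]$ while $f$ sends this cylinder into $[\beta_{\theta(i_1)},\beta_{\theta(i_1)+1}]$, a pair $x_1<x_2$ chosen in the interiors of the cylinders indexed by $i$ and $j$ satisfies $f(x_1)>f(x_2)$ at an inversion and $f(x_1)<f(x_2)$ at a non-inversion, ruling out both directions of monotonicity. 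The main obstacle I expect is the bookkeeping in item (1), distinguishing between the two representations of a $P_q$-rational at each step; once that is set, items (2)--(4) follow from elementary combinatorics of permutations and the digit-wise action of $f$.
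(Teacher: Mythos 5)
Your treatment of items (1), (2) and (4) is essentially the paper's argument, only more explicit: the paper disposes of (1) in one line by appealing to Remark~\ref{Remark3} (representations with period $(q-1)$ are discarded, so no value with tail $(\theta(q-1))$ is ever produced), of (2) by identifying the invariant set with the digit-restricted Moran-type set $E=\{x=\Delta^{P_q}_{i_1i_2\ldots}:\theta(i_k)=i_k\}$ and citing \cite{sets} for its self-similarity and dimension, and of (4) by the same cylinder/inversion observation you make --- your remark that a permutation other than the identity and the reversal must contain both an inversion and a non-inversion is in fact a cleaner justification than the paper's ``$\theta(i_{k_0+1})<\theta(j_{k_0+1})$ or $>$ as well.'' The genuine divergence is item (3). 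You deduce non-bijectivity from the non-surjectivity already established in item (1); that is logically valid if the codomain is taken to be $[0,1]$, but it makes (3) a trivial corollary of (1) and proves less than the paper does. The paper's proof of (3) establishes failure of \emph{injectivity}: for a $P_q$-rational image value $y_0=\Delta^{P_q}_{\gamma_1\ldots\gamma_m(0)}=\Delta^{P_q}_{\gamma_1\ldots[\gamma_m-1](q-1)}$ one pulls back the two distinct representations through $\theta^{-1}$ and obtains two distinct admissible arguments $x_1\ne x_2$ with $f(x_1)=f(x_2)=y_0$ (while for $P_q$-irrational images the preimage is unique). Since non-injectivity is evidently the intended content of (3) --- otherwise the item would be redundant --- you should supplement your argument with this pullback of the double representation; it is exactly the same bookkeeping you already set up for item (1), run in the opposite direction.
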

\begin{proof}
\emph{The first property} follows from Remark~\ref{Remark3}. \emph{The second property} follows from the definition of $f$ and fractal properties of the set having restrictions on using digits in the $P_q$-representations of elements (\cite{sets}), 
$$
E=\left\{x: x=\Delta^{P_q} _{i_ii_2...i_k...}, i_k\in \{r_1, r_2,\dots , r_t\}, 2<t<q, \theta(r_n)=r_n, n=\overline{1,t}\right\}
$$.
In addition, the Hausdorff dimension $\alpha_0(E)$ of the set $E$ satisfies the following equation (see \cite{sets}):
$$
\sum^{t} _{n=1}{\left(p_{r_n}\right)^{\alpha_0}}=1.
$$

Let us prove \emph{the third property}. Suppose $x_1=\Delta^{P_q} _{i_{1}i_{2}...i_{k}...}$ and $x_2=\Delta^{P_q} _{j_{1}j_{2}...j_{k}...}$ and $x_1\ne x_2$. Let us  find the following set
$$
\{x: f(x_1)=f(x_2), x_1 \ne x_2\}.
$$

If $y_0=f(x_1)=f(x_2)$ is $P_q$-irrational, then the following must be hold:
$$
y_0=\Delta^{P_q} _{\gamma_{1}\gamma_{2}...\gamma_{k}...}=\Delta^{P_q} _{\theta(i_{1})\theta(i_{2})...\theta(i_{k})...}=\Delta^{P_3} _{\theta(j_{1})\theta(j_{2})...\theta(j_{k})...}.
$$
Since the last equalities hold and $y_0$ is $P_q$-irrational, we have $x_1=x_2$ but this contradicts the condition $x_1\ne x_2$.

If $y_0$ is $P_q$-rational, then it is obvious that there exsist different numbers $x_1$ and $x_2$ such that
$$
f(x_1)=\Delta^{P_q} _{\gamma_{1}\gamma_{2}...\gamma_{m-1}\gamma_m000...}=\Delta^{P_q} _{\gamma_{1}\gamma_{2}...\gamma_{m-1}[\gamma_m-1][q-1][q-1][q-1]...}=f(x_2).
$$

\emph{The fourth property} follows from the following. Suppose $x_1<x_2$, as well as  $x_1=\Delta^{P_q} _{i_{1}i_{2}...i_{k}...}$ and $x_2=\Delta^{P_q} _{j_{1}j_{2}...j_{k}...}$, i.e., there exists $k_0$ such that $i_r=j_r$ for all $r=\overline{1,k_0}$ and $i_{k_0+1}<j_{k_0+1}$. Considering $f(x_1)$ and$f(x_2)$, one can note the following: the equality $\theta(i_r)=\theta(j_r)$ holds for all $r=\overline{1,k_0}$ but there are $i_{k_0+1}$ and $j_{k_0+1}$ from $\{0,1,\dots , q-1\}$ such that $\theta(i_{k_0+1})<\theta(j_{k_0+1})$ or $\theta(i_{k_0+1})>\theta(j_{k_0+1})$ as well. 
So, $f$ is not monotonic. 
\end{proof}

\section{Differential properties}

Let us begin with  definitions of auxiliary notions of a cylinder and the shift operator.
 
Let $c_1,c_2,\dots, c_m$ be a fixed ordered tuple of integers such that $c_r\in\{0,1,2\}$ for $r=\overline{1,m}$. 

\emph{A cylinder $\Lambda^{P_q} _{c_1c_2...c_m}$ of rank $m$ with base $c_1c_2\ldots c_m$} is the following set 
$$
\Lambda^{P_q} _{c_1c_2...c_m}\equiv\{x: x=\Delta^{P_q} _{c_1c_2...c_m i_{m+1}i_{m+2}\ldots i_{m+k}\ldots}\}.
$$
It is easy to see that  any cylinder $\Delta^{P_q} _{c_1c_2...c_m}$ is a closed interval of the form
$$
\left[\Delta^{P_q} _{c_1c_2...c_m(0)}, \Delta^{P_q} _{c_1c_2...c_m(q-1)}\right]=\left[\beta_{c_1}+\sum^{m} _{k=2}{\left(\beta_{c_r}\prod^{k-1} _{l=1}{p_{c_l}}\right)}, \beta_{c_1}+\sum^{m} _{k=2}{\left(\beta_{c_r}\prod^{k-1} _{l=1}{p_{c_l}}\right)}+\prod^{m} _{r=1}{p_{c_r}} \right].
$$
Whence,
$$
\left|\Lambda^{P_3} _{c_1c_2...c_m}\right|=\sup \Lambda^{P_3} _{c_1c_2...c_m}-\inf \Lambda^{P_3} _{c_1c_2...c_m}=\prod^{m} _{r=1}{p_{c_r}},
$$
where $|\cdot|$ is the Lebesgue measure of a set.

A  map $\sigma(x)$ of the following form  is called \emph{the shift operator} and is a piecewise linear function: 
$$
\sigma(x)=\sigma \left(\Delta^{P_q} _{i_1i_2...i_m...}\right)=\sigma^n \left(\Delta^{P_q} _{i_{2}i_{3}i_{4}...}\right)=\beta_{i_{2}}+\sum^{\infty} _{s=3}{\left(\beta_{i_s} \prod^{s-1} _{l=2}{p_{i_l}}\right)}.
$$
By analogy, 
$$
\sigma^n(x)=\sigma^n \left(\Delta^{P_q} _{i_1i_2...i_m...}\right)=\sigma^n \left(\Delta^{P_q} _{i_{n+1}i_{n+2}i_{n+3}...}\right)=\beta_{i_{n+1}}+\sum^{\infty} _{s=n+2}{\left(\beta_{i_s} \prod^{s-1} _{l=n+1}{p_{i_l}}\right)},
$$
$0\le \sigma^n(x)\le 1$.

\begin{lemma}
The function   $f\in S_q$ except for $f_1$ and $f_{q!}$, is continuous at $P_q$-irrational  points, and the $P_q$-rational  points are points of discontinuity of the function. Functions  $f_1$ and $f_{q!}$ are continuous on the closed interval $[0,1]$.
\end{lemma}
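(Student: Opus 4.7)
The plan is to split the statement into three claims handled separately: continuity of $f_1$ and $f_{q!}$ on $[0,1]$; continuity of a generic $f \in S_q \setminus \{f_1,f_{q!}\}$ at every $P_q$-irrational point; and discontinuity of the same $f$ at every $P_q$-rational point. The unifying tool is a \emph{cylinder-shrinking} principle: if two points share the first $k$ digits of their $P_q$-representations, their images under $f$ share the first $k$ digits as well, and hence lie in a common image cylinder of length $\prod_{r=1}^{k} p_{\theta(i_r)}$, which tends to $0$ since each $p_j \in (0,1)$.

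For $f_1$ the identity is trivially continuous. For $f_{q!}$ I would first verify that the two $P_q$-representations $\Delta^{P_q}_{i_1\ldots i_{m-1}i_m(0)}$ and $\Delta^{P_q}_{i_1\ldots i_{m-1}[i_m-1](q-1)}$ of a $P_q$-rational number are sent to the two $P_q$-representations of a single $P_q$-rational number; this is precisely the content of the second alternative in Lemma~1 and makes $f_{q!}$ unambiguous on $P_q$-rationals. Continuity then follows from the cylinder-shrinking principle applied to whichever representation is compatible with the direction of approach.

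For $f \in S_q\setminus\{f_1,f_{q!}\}$ at a $P_q$-irrational point $x = \Delta^{P_q}_{i_1i_2\ldots}$, uniqueness of the representation places $x$ strictly inside every cylinder $\Lambda^{P_q}_{i_1\ldots i_k}$. Given $\varepsilon > 0$, I would pick $k$ large enough that $\prod_{r=1}^{k}p_{\theta(i_r)} < \varepsilon$, then choose $\delta > 0$ so that $(x-\delta,x+\delta)\subset \Lambda^{P_q}_{i_1\ldots i_k}$, and conclude $|f(x)-f(x')|<\varepsilon$ whenever $|x-x'|<\delta$, since both images lie in the single image cylinder $\Lambda^{P_q}_{\theta(i_1)\ldots\theta(i_k)}$.

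For discontinuity at a $P_q$-rational $x = \Delta^{P_q}_{i_1\ldots i_{m-1}i_m(0)}$, I would use the convention fixed in Remark~\ref{Remark3} (the $(q-1)$-periodic representation is discarded) so that $f(x) = \Delta^{P_q}_{\theta(i_1)\ldots\theta(i_m)(\theta(0))}$. Taking $x_n \nearrow x$ forces the digits of $x_n$ to agree with those of the alternative string $i_1\ldots i_{m-1}[i_m-1](q-1)$ on arbitrarily long prefixes, so the cylinder-shrinking principle yields $f(x_n)\to\Delta^{P_q}_{\theta(i_1)\ldots\theta(i_{m-1})\theta(i_m-1)(\theta(q-1))}$. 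By Lemma~1 this limit differs from $f(x)$ exactly when $f\notin\{f_1,f_{q!}\}$, producing a jump; approaching from the right instead gives convergence to $f(x)$, so $f$ is right-continuous but not left-continuous at~$x$. The main obstacle is the bookkeeping around the two $P_q$-representations of a $P_q$-rational number: one must be careful about which representation Remark~\ref{Remark3} selects, about how the two representations correspond to one-sided approaches, and about the fact that the computed $f(x)$ only matches one of the two one-sided limits. Once this is straightened out, the analytic estimate reduces to the combinatorial observation that the tails $\prod p_{\theta(i_r)}$ vanish, which is immediate from $p_j\in(0,1)$.
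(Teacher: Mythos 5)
Your proposal is correct and follows essentially the same route as the paper: the cylinder-shrinking bound $|f(x)-f(x_0)|\le\prod_{r=1}^{k}p_{\theta(i_r)}\to 0$ for continuity at $P_q$-irrational points, and the comparison of the two one-sided limits $\Delta^{P_q}_{\theta(i_1)\ldots\theta(i_{m-1})\theta(i_m-1)(\theta(q-1))}$ and $\Delta^{P_q}_{\theta(i_1)\ldots\theta(i_m)(\theta(0))}$ (distinct by Lemma~1 unless $f\in\{f_1,f_{q!}\}$) for the jump at $P_q$-rational points. Your additional bookkeeping about which representation Remark~\ref{Remark3} selects and the resulting one-sided continuity is a welcome clarification but does not change the argument.
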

\begin{proof} Let use the standart technique considering two cases: the case when the argument is a $P_q$-rational number, as well as  the argument is a $P_q$-irrational number.

Let $x_0$ be a $P_3$-rational number, i.e.,
$$
x_0=\Delta^{P_q} _{i_1i_2...i_{m-1}i_m(0)}=\Delta^{P_q} _{i_1i_2...i_{m-1}[i_m-1](2)},~~~i_m \ne 0.
$$
Then 
$$
\lim_{x \to x_0 -0} {f(x)}=\Delta^{P_q} _{\theta(i_1)\theta(i_2)...\theta(i_{m-1})\theta(i_m-1)(\theta(q-1))},
$$
$$
\lim_{x \to x_0 +0} {f(x)}=\Delta^{P_q} _{\theta(i_1)\theta(i_2)...\theta(i_{m-1})\theta(i_m)(\theta(0))}.
$$
So, $x_0$ is a point of discontinuity. For the functions $f_1$ and $f_{q!}$, we get
$$
\lim_{x \to x_0 -0} {f(x)}=\lim_{x \to x_0 +0} {f(x)}.
$$

Suppose the following: $x_0, x \in \Lambda^{P_q} _{c_1c_2...c_m}$ are arbitrary $P_q$-irrational numbers; then  the condition $i_{r}=j_{r}$ holds  for all $j=\overline{1,m}$ but our numbers such that  $i_{m+1}\ne j_{m+1}$.  

Since $f$ is a bounded function, $0\le f(x) \le 1$, we obtain $g(x)-g(x_0)=$
$$
=\beta_{\theta(i_1)}+\sum^{m} _{k=2}{\left(\beta_{\theta(i_k)}\prod^{k-1} _{t=1}{p_{\theta{(i_t)}}}\right)}+\prod^{m} _{u=1}{p_{\theta(i_u)}}\left(\beta_{\theta(i_{m+1})}+\sum^{\infty} _{s=m+2}{\left(\beta_{\theta(i_s)}\prod^{s-1} _{l=m+1}{p_{\theta(i_l)}}\right)}\right)
$$
$$
-\beta_{\theta(i_1)}-\sum^{m} _{k=2}{\left(\beta_{\theta(i_k)}\prod^{k-1} _{t=1}{p_{\theta{(i_t)}}}\right)}-\prod^{m} _{u=1}{p_{\theta(i_u)}}\left(\beta_{\theta(j_{m+1})}+\sum^{\infty} _{s=m+2}{\left(\beta_{\theta(j_s)}\prod^{s-1} _{l=m+1}{p_{\theta(j_l)}}\right)}\right)
$$
$$
=\prod^{k_0} _{u=1}{p_{\theta(i_u)}}\left(\beta_{\theta(i_{m+1})}+\sum^{\infty} _{s=m+2}{\left(\beta_{\theta(i_s)}\prod^{s-1} _{l=m+1}{p_{\theta(i_l)}}\right)}-\beta_{\theta(j_{m+1})}-\sum^{\infty} _{s=m+2}{\left(\beta_{\theta(j_s)}\prod^{s-1} _{l=m+1}{p_{\theta(j_l)}}\right)}\right)
$$
$$
\le (1-0)\prod^{m} _{u=1}{p_{\theta(i_u)}}=\prod^{m} _{u=1}{p_{\theta(i_u)}}.
$$
Hence
$$
\lim_{m\to \infty}{|f(x)-f(x_0)|}=\lim_{m\to \infty}{\prod^{m} _{u=1}{p_{\theta(i_u)}}}\le\lim_{m\to \infty}{\left(\max\{p_0, p_1, p_2\}\right)^{m}}=0.
$$

So, $\lim_{x\to x_0}{f(x)}=f(x_0)$, i.e., any function $f\in S_q$ is continuous at any $P_q$-irrational point. 
\end{proof}

Let us consider a question about a fact that a function $f\in S_q$, but $ f(x)\ne x$, can be a singular function on a set  of the full Lebesgue measure.

\begin{conjecture}
A function $f\in S_q$ such that $f(x)\ne x$ is a singular function almost everywhere on~$[0, 1]$.
\end{conjecture}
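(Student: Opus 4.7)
The plan is to establish the conjecture by proving that $f'(x)=0$ for Lebesgue-almost every $x\in[0,1]$, adapting the classical blueprint used for singular functions of Salem type. The entire argument rests on a single structural fact about the $P_q$-representation: because the cylinder $\Lambda^{P_q}_{c_1\ldots c_m}$ has Lebesgue length $\prod_{r=1}^{m}p_{c_r}$, the coding $x\mapsto(i_1,i_2,\ldots)$ transports Lebesgue measure on $[0,1]$ onto the Bernoulli product measure with weights $(p_0,\ldots,p_{q-1})$ on $\{0,\ldots,q-1\}^{\mathbb{N}}$. Borel's strong law therefore yields, for Lebesgue-a.e. $x_0=\Delta^{P_q}_{i_1 i_2\ldots}$ and every digit $j$, the frequency asymptotic $\#\{r\le n:i_r=j\}/n\to p_j$.

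Next, I would compare cylinder lengths under $f$. For any $x'\in\Lambda^{P_q}_{i_1\ldots i_n}$ one has
\[
|x'-x_0|\le\prod_{r=1}^{n}p_{i_r},\qquad |f(x')-f(x_0)|\le\prod_{r=1}^{n}p_{\theta(i_r)},
\]
the second inequality because $f(\Lambda^{P_q}_{i_1\ldots i_n})\subseteq\Lambda^{P_q}_{\theta(i_1)\ldots\theta(i_n)}$. Taking logarithms of the ratio and using the digit-frequency limit,
\[
\frac{1}{n}\sum_{r=1}^{n}\log\frac{p_{\theta(i_r)}}{p_{i_r}}\longrightarrow\sum_{j=0}^{q-1}p_j\log\frac{p_{\theta(j)}}{p_j}\le\log\sum_{j=0}^{q-1}p_{\theta(j)}=0
\]
by Jensen's inequality, with equality if and only if $p_{\theta(j)}=p_j$ for every $j$. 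Under the generic hypothesis that $\theta$ does not permute only within equiprobable level sets of $(p_j)$, the limit is strictly negative and the ratio $\prod p_{\theta(i_r)}/\prod p_{i_r}$ decays exponentially in $n$.

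To convert this into a derivative estimate, let $n$ be the length of the common $P_q$-prefix of $x_0$ and $x'$; then $x_0$ and $x'$ lie in distinct rank-$(n+1)$ sub-cylinders of $\Lambda^{P_q}_{i_1\ldots i_n}$, and the distance between them is at least a fraction of $\prod_{r=1}^{n}p_{i_r}$ provided $x_0$ stays macroscopically separated from the boundary of the sub-cylinder containing it. A Borel--Cantelli argument applied to the sequence of normalised boundary gaps shows that this separation condition holds along a subsequence $n_k\to\infty$ for Lebesgue-a.e. $x_0$. Combining this lower bound on $|x'-x_0|$ with the upper bound on $|f(x')-f(x_0)|$ from the previous step yields
\[
\frac{|f(x')-f(x_0)|}{|x'-x_0|}\le C(x_0)\prod_{r=1}^{n}\frac{p_{\theta(i_r)}}{p_{i_r}}\longrightarrow 0
\]
as $x'\to x_0$, so that $f'(x_0)=0$.

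The main obstacle is precisely that $f$ is not monotonic (Lemma 2, item (4)); the classical monotonicity-based arguments for turning cylinder-length comparisons into genuine two-sided derivative bounds are unavailable, and one must instead control the difference quotient uniformly over arbitrary sequences $x'\to x_0$ rather than a canonical one-sided sequence of cylinder endpoints. Upgrading the cylinder estimate to a uniform bound, via quantitative control of the position of $x_0$ inside its cylinder filtration, is the technical heart of the proof. A secondary obstacle is the degenerate case $p_{\theta(j)}=p_j$ for all $j$, where Jensen's inequality is saturated; there the above strategy collapses and the conjecture probably requires either an extra hypothesis on the pair $(p_j,\theta)$ or a subtler analysis based on the law of the iterated logarithm applied to the fluctuations of the log-ratio partial sums.
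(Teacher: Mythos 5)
The paper does not prove this statement --- it is left as a conjecture, and the text following it is precisely the heuristic you describe: the ratio of the $f$-increment of a rank-$m$ cylinder to its length reduces to $\prod_{r=1}^{m}p_{\theta(c_r)}/p_{c_r}$, which tends to $0$ for Lebesgue-a.e.\ $x$ by the digit-frequency (normality) argument combined with a weighted AM--GM bound equivalent to your Jensen step $\sum_j p_j\log(p_{\theta(j)}/p_j)\le\log\sum_j p_{\theta(j)}=0$. So your core computation is the same as the paper's, only stated more cleanly. Where your write-up adds genuine value is in naming the two gaps the paper glosses over. First, because $f$ is not monotone, the vanishing of the cylinder-increment ratio (and of the difference quotient along the one-digit-change sequence the paper also considers) does not by itself yield $f'(x_0)=0$ for arbitrary $x'\to x_0$; your proposed Borel--Cantelli control of the position of $x_0$ inside its cylinder filtration is exactly the missing step, and it is not carried out in the paper either --- which is presumably why the statement remains a conjecture. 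Second, your flagged degenerate case is not merely a technical nuisance: if $p_{\theta(j)}=p_j$ for all $j$ the cylinder ratio is identically $1$ and the increment-to-length quotient converges to the nonzero constant $\Delta^{P_q}_{(\theta(q-1))}-\Delta^{P_q}_{(\theta(0))}$, so this method cannot give singularity; indeed for $q=2$, $p_0=p_1=\tfrac12$ and $\theta$ the transposition one gets $f(x)=1-x$, an absolutely continuous function with $f(x)\ne x$, so the conjecture as literally stated is false and needs the hypothesis $p_{\theta(j)}\ne p_j$ for some $j$. In short: same route as the paper, correctly executed as far as it goes, and your identification of the two obstacles is more honest than the paper's presentation.
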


Let us describe two techniques. 

For the first, let us consider a derivative on cylinders. Since a value of the increment $\mu_f (\cdot)$ of the fucnction $f$ on a set $\Lambda^{P_q} _{c_1c_2...c_m}$ can be calculated as following:
$$
\mu_f\left(\Lambda^{P_q} _{c_1c_2...c_m}\right)= f\left(\sup\Lambda^{P_q} _{c_1c_2...c_m}\right)- f\left(\inf \Lambda^{P_q} _{c_1c_2...c_m}\right)=\Delta^{P_q} _{\theta(c_1)\theta(c_2)...\theta(c_m)(\theta(q-1))}-\Delta^{P_q} _{\theta(c_1)\theta(c_2)...\theta(c_m)(\theta(0))}
$$
$$
=\left(\Delta^{P_q} _{(\theta(q-1))}-\Delta^{P_q} _{(\theta(0))}\right)\prod^{m} _{r=1}{p_{\theta(c_r)}},
$$
we get
$$
\lim_{m\to\infty}{\frac{\mu_f\left(\Lambda^{P_q} _{c_1c_2...c_m}\right)}{\left|\Lambda^{P_q} _{c_1c_2...c_m}\right|}}=\left(\Delta^{P_q} _{(\theta(q-1))}-\Delta^{P_q} _{(\theta(0))}\right)\lim_{m\to\infty}{\frac{\prod^{m} _{r=1}{p_{\theta(c_r)}}}{\prod^{m} _{r=1}{p_{c_r}}}}=\left(\Delta^{P_q} _{(\theta(q-1))}-\Delta^{P_q} _{(\theta(0))}\right)\lim_{m\to\infty}{\prod^{m} _{r=1}{\frac{p_{\theta(c_r)}}{p_{c_r}}}}.
$$

For the second, choose $x_0=\Delta^{P_q} _{i_1i_2...i_{n_0-1}ii_{n_0+1}i_{n_0+2}i_{n_0+3}...}$, where $i_{n_0}=i$ is a fixed digit; then let us  consider  a sequence $(x_n)$ such that  $x_n=\Delta^{P_q} _{i_1i_2...i_{n_0-1}ji_{n_0+1}i_{n_0+2}i_{n_0+3}...}$.

Suppose $n_0=1, 2, 3, \dots$; then
$$
x_n-x_0=\left(\prod^{n_0-1} _{r=1}{p_{i_r}}\right)\times
$$
$$
\times\left(\beta_j -\beta_i+p_j\left(\beta_{i_{n_0+1}}+\sum^{\infty} _{k=n_0+2}{\left(\beta_{i_k}\prod^{k-1} _{t=n_0+1}{p_{i_t}}\right)}\right)-p_i\left(\beta_{i_{n_0+1}}+\sum^{\infty} _{k=n_0+2}{\left(\beta_{i_k}\prod^{k-1} _{t=n_0+1}{p_{i_t}}\right)}\right)\right)
$$
$$
=\left(\prod^{n_0-1} _{r=1}{p_{i_r}}\right)\left(\beta_j -\beta_i+(p_j-p_i)\left(\beta_{i_{n_0+1}}+\sum^{\infty} _{k=n_0+2}{\left(\beta_{i_k}\prod^{k-1} _{t=n_0+1}{p_{i_t}}\right)}\right)\right)
$$
$$
=\left(\prod^{n_0-1} _{r=1}{p_{i_r}}\right)\left(\beta_j -\beta_i+(p_j-p_i)\sigma^{n_0}(x_n)\right),
$$
$$
f(x_n)-f(x_0)=\left(\prod^{n_0-1} _{r=1}{\theta(p_{i_r})}\right)\left(\beta_{\theta(j)} -\beta_{\theta(i)}+(p_{\theta(j)}-p_{\theta(i)})\sigma^{n_0}(f(x_n))\right),
$$
and
$$
\lim_{x_n-x_0\to 0}{\frac{f(x_n)-f(x_0)}{x_n-x_0}}=\lim_{n_0\to\infty}{\frac{f(x_n)-f(x_0)}{x_n-x_0}}
$$
$$
=\lim_{n_0\to\infty}{\frac{\beta_{\theta(j)} -\beta_{\theta(i)}+(p_{\theta(j)}-p_{\theta(i)})\sigma^{n_0}(f(x_n))}{\beta_j -\beta_i+(p_j-p_i)\sigma^{n_0}(x_n)}\prod^{n_0-1} _{r=1}{\frac{p_{\theta(c_r)}}{p_{c_r}}}}
$$

In our two cases, we have a certain number or a bounded sequence which multiplied on 
$$
\prod^{r} _{t=1}{\frac{p_{\theta(c_t)}}{p_{c_t}}},
$$
where $r\to\infty$. Let us evaluate this. 

Suppose $s\in\{0, 1, 2, \dots q-1\}$ and  $N_s(x, k)$ is the number of the digit $s$ in the $r$ first digits of the $P_q$-representation of $x$. Also, 
$$
\lim_{r\to\infty}{\frac{N_s(x,r)}{k}}=\nu_s(x)
$$
is \emph{the frequency of the digit $s$ in the $P_q$-representation of $x\in [0,1]$}. Using (for example \cite{{Salem1943}}, etc.) Salem's techniques (including the fact that the Salem function is continuous and strictly increasing for positive $p_0, p_1, \dots , p_{q-1}$),  as well as (see \cite{{HW1979}, {PVB2011}} and references therein) a statement  that for $q$-ary expansions the set of normal numbers (numbers such that $\nu_s =p_s=\frac 1 q $ holds for all digits~$s$) has the full Lebesgue measure, we obtain that on the set of the full  Lebesgue measure  the condition $\nu_s =p_s$ holds  for all digits $s$ for the $P_q$-representation.

So,
$$
\lim_{r\to\infty} \prod^{r} _{t=1}{\frac{p_{\theta(c_t)}}{p_{c_t}}}=\lim_{r\to\infty} \left(\frac{p^{N_0(x, r)} _{\theta(0)} p^{N_1(x,r)} _{\theta(1)} \cdots  p^{N_{q-1}(x,r)} _{\theta(q-1)}}{p^{N_0(x, r)} _0 p^{N_1(x, r)} _1 \cdots p^{N_{q-1}(x, r)} _{q-1}}\right)=\lim_{r\to\infty} \left(\frac{p^{\frac{N_0(x, r)}{r}} _{\theta(0)} p^{\frac{N_1(x,r)}{r}} _{\theta(1)} \cdots  p^{\frac{N_{q-1}(x,r)}{r}} _{\theta(q-1)}}{p^{\frac{N_0(x, r)}{r}} _0 p^{\frac{N_1(x, r)}{r}} _1 \cdots p^{\frac{N_{q-1}(x, r)}{r}} _{q-1}}\right)^r
$$
$$
=\lim_{r\to\infty} \left(\frac{p^{\nu_0} _{\theta(0)} p^{\nu_1} _{\theta(1)} \cdots  p^{\nu_{q-1}} _{\theta(q-1)}}{p^{\nu_0} _0 p^{\nu_1} _1 \cdots p^{\nu_{q-1}} _{q-1}}\right)^r=\lim_{r\to\infty} \left(\frac{p^{p_0} _{\theta(0)} p^{p_1} _{\theta(1)} \cdots  p^{p_{q-1}} _{\theta(q-1)}}{p^{p_0} _0 p^{p_1} _1 \cdots p^{p_{q-1}} _{q-1}}\right)^r=\lim_{r\to\infty}\left(\prod^{q-1} _{m=0}{\frac{p^{p_m} _{\theta(m)} \cdot p^{1-p_m} _{m}}{p_m}}\right)^r.
$$
Since $p_0+p_1+\dots +p_{q-1}=1$, $0<p_m<1$ for $m=\overline{0, q-1}$, and $p^\omega _m<1$ for $\omega >0$, as well as
$$
\lim_{p_m\to 1}{\frac{p^{p_m} _{\theta(m)} \cdot p^{1-p_m} _{m}}{p_m}}=p_{\theta(m)}<1
$$
and
$$
\lim_{p_m\to 0}{\frac{p^{p_m} _{\theta(m)} \cdot p^{1-p_m} _{m}}{p_m}}=1 
$$
hold, we have
$$
\prod^{q-1} _{m=0}{\frac{p^{p_m} _{\theta(m)} \cdot p^{1-p_m} _{m}}{p_m}}<1.
$$

Whence, 
$$
\lim_{r\to\infty} \prod^{r} _{t=1}{\frac{p_{\theta(c_t)}}{p_{c_t}}}=0
$$
and
$$
f^{'}(x_0)=\lim_{m\to\infty}{\frac{\mu_f\left(\Lambda^{P_q} _{c_1c_2...c_m}\right)}{\left|\Lambda^{P_q} _{c_1c_2...c_m}\right|}}=\lim_{x_n\to x_0}{\frac{f(x_n)-f(x_0)}{x_n-x_0}}=0.
$$

\section{Self-similarity, graph, and functional equations}

\begin{theorem} 
Let $P_q=(p_0, p_1, \dots , p_{q-1})$ be a fixed tuple of real numbers such that $p_t\in (0,1)$, where $t=\overline{0,q-1}$, $\sum_t {p_t}=1$, and $0=\beta_0<\beta_t=\sum^{t-1} _{l=0}{p_l}<1$ for all $t\ne 0$. Then the following system of functional equations
\begin{equation}
\label{eq: system}
f\left(\sigma^{n-1}(x)\right)=\beta_{\theta(i_{n})}+p_{\theta(i_{n})}f\left(\sigma^n(x)\right),
\end{equation}
where $x=\Delta^{P_q} _{i_1i_2...i_k...}$, $n=1,2, \dots$, $\sigma$ is the shift operator, and $\sigma_0(x)=x$, has the unique solution
$$
f(x)=\beta_{\theta({i_1})}+\sum^{\infty} _{k=2}{\left(\beta_{\theta(i_{k})}\prod^{k-1} _{r=1}{p_{\theta(i_{r})}}\right)}
$$
in the class of determined and bounded on $[0, 1]$ functions. 
\end{theorem}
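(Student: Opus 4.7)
The plan is to prove existence and uniqueness separately by a standard telescoping/iteration argument for functional equations on digit-representation systems.

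For \textbf{existence}, I would substitute the candidate
$$f(x)=\beta_{\theta(i_{1})}+\sum^{\infty}_{k=2}\left(\beta_{\theta(i_{k})}\prod^{k-1}_{r=1}p_{\theta(i_{r})}\right)$$
directly into \eqref{eq: system}. Using the definition of $\sigma$, one has $\sigma^{n-1}(x)=\Delta^{P_q}_{i_n i_{n+1}\ldots}$ and $\sigma^{n}(x)=\Delta^{P_q}_{i_{n+1}i_{n+2}\ldots}$. Writing $f(\sigma^{n-1}(x))$ according to the formula, separating the leading term $\beta_{\theta(i_n)}$, and factoring $p_{\theta(i_n)}$ out of the remaining sum yields exactly $\beta_{\theta(i_n)}+p_{\theta(i_n)}f(\sigma^n(x))$. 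Boundedness is immediate since $0\le f(x)\le 1$.

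For \textbf{uniqueness}, suppose $g\colon[0,1]\to\mathbb{R}$ is any bounded function (say $|g|\le M$) satisfying \eqref{eq: system}. I would iterate the equation: applying it once gives $g(x)=\beta_{\theta(i_1)}+p_{\theta(i_1)}g(\sigma(x))$; substituting the analogous identity for $g(\sigma(x))$ and continuing by induction on $n$ produces
$$g(x)=\beta_{\theta(i_1)}+\sum^{n}_{k=2}\left(\beta_{\theta(i_k)}\prod^{k-1}_{r=1}p_{\theta(i_r)}\right)+\left(\prod^{n}_{r=1}p_{\theta(i_r)}\right)g(\sigma^{n}(x)).$$
Since $p_{\theta(i_r)}\le \max_j p_j<1$, the tail satisfies
$$\left|\left(\prod^{n}_{r=1}p_{\theta(i_r)}\right)g(\sigma^{n}(x))\right|\le M\bigl(\max_j p_j\bigr)^{n}\longrightarrow 0$$
as $n\to\infty$. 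Taking the limit in the iterated identity recovers exactly the claimed closed form, so $g\equiv f$.

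The argument is essentially routine; the only delicate point is verifying that the iteration is legitimate for \emph{every} $x\in[0,1]$, including $P_q$-rational numbers that admit two representations. By Remark~\ref{Remark3} the representation has been fixed (those with period $(q-1)$ are excluded), so each $x$ has a canonical digit sequence and the recurrence unfolds unambiguously. I do not expect any real obstacle beyond carefully bookkeeping the factored products $\prod^{k-1}_{r=1}p_{\theta(i_r)}$ when collecting terms in the induction step.
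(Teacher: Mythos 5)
Your proposal is correct and follows essentially the same route as the paper: iterating the functional equation to obtain the partial-sum identity with the remainder term $\left(\prod^{n}_{r=1}p_{\theta(i_r)}\right)g(\sigma^{n}(x))$, and then killing the remainder using boundedness and $\max_j p_j<1$. The only difference is that you additionally verify existence by direct substitution, which the paper leaves implicit; that is a harmless (indeed welcome) addition.
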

\begin{proof} Since the function $f$ is a determined and bounded  function on $[0,1]$, using system~\eqref{eq: system},  we have
$$
f(x)=\beta_{\theta ({i_1})}+p_{\theta({i_1})}f(\sigma(x))=\beta_{\theta({i_1})}+p_{\theta({i_1})}(\beta_{\theta({i_2})}+p_{\theta({i_2})}f(\sigma^2(x)))=\dots
$$
$$
\dots =\beta_{\theta({i_1})}+\beta_{\theta({i_2})}p_{\theta({i_1})}+\beta_{\theta({i_3})}p_{\theta({i_1})}p_{\theta({i_2})}+\dots +\beta_{\theta{i_n}}\prod^{n-1} _{l=1}{p_{\theta({i_l})}}+\left(\prod^{n} _{r=1}{p_{\theta({i_r})}}\right)f(\sigma^n(x)).
$$
Since 
$$
\prod^{n} _{r=1}{p_{\theta(i_r)}}\le \left( \max\{p_0, p_1, \dots , p_{q-1}\}\right)^n\to 0, ~~~ n\to \infty,
$$
and
$$
\lim_{n\to\infty}{f(\sigma^n(x))\prod^{n} _{r=1}{p_{\theta({i_r})}}}=0,
$$
we have
$$
f(x)=\beta_{\theta(i_1)}+\sum^{\infty} _{k=2}{\left(\beta_{\theta({i_k})}\prod^{k-1} _{r=1}{p_{\theta({i_r})}}\right)}.
$$
\end{proof}

\begin{theorem}
Suppose
$$
\psi_t: \left\{
\begin{array}{rcl}
x^{'}&=&p_tx+\beta_t\\
y^{'} & = &p_{\theta(t)}y+\beta_{\theta(t)}\\
\end{array}
\right.
$$
are affine transformations for $t=0, 1, \dots , q-1$ and $p_0, p_1, \dots , p_{q-1} \in (0, 1)$. Then the graph $\Gamma$ of  $f$ is a self-affine set of $\mathbb R^2$, as well as
$$
\Gamma= \bigcup^{q-1} _{t=0}{\psi_t(\Gamma)}.
$$
\end{theorem}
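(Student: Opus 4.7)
The plan is to derive both claims (the self-affine decomposition and the self-affinity of $\Gamma$) directly from the functional equation of the previous theorem. Recall that for $x = \Delta^{P_q}_{i_1 i_2 \ldots}$ we have
$$
f(x) = \beta_{\theta(i_1)} + p_{\theta(i_1)} f(\sigma(x)), \qquad x = \beta_{i_1} + p_{i_1}\sigma(x).
$$
These two identities are exactly the statement that the pair $(x, f(x))$ is the image of $(\sigma(x), f(\sigma(x)))$ under $\psi_{i_1}$. The whole proof consists of turning this observation into the set equality.

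For the inclusion $\bigcup_{t=0}^{q-1} \psi_t(\Gamma) \subseteq \Gamma$, I would take an arbitrary point $(u, f(u)) \in \Gamma$ with $u = \Delta^{P_q}_{j_1 j_2 \ldots} \in [0,1]$, and fix $t \in \{0, 1, \dots, q-1\}$. The first component of $\psi_t(u, f(u))$ is $p_t u + \beta_t$, which has $P_q$-representation $\Delta^{P_q}_{t j_1 j_2 \ldots}$; call this point $x$. Applying the functional equation gives
$$
f(x) = \beta_{\theta(t)} + p_{\theta(t)} f(\sigma(x)) = \beta_{\theta(t)} + p_{\theta(t)} f(u),
$$
which is precisely the second component of $\psi_t(u, f(u))$. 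Hence $\psi_t(u, f(u)) = (x, f(x)) \in \Gamma$.

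For the reverse inclusion $\Gamma \subseteq \bigcup_{t=0}^{q-1} \psi_t(\Gamma)$, I would pick $(x, f(x)) \in \Gamma$, write $x = \Delta^{P_q}_{i_1 i_2 \ldots}$, set $t := i_1$, and put $u := \sigma(x) \in [0,1]$. Then $x = p_t u + \beta_t$ by the definition of the $P_q$-representation, and $f(x) = \beta_{\theta(t)} + p_{\theta(t)} f(u)$ by the functional equation. Therefore $(x, f(x)) = \psi_t(u, f(u)) \in \psi_t(\Gamma)$. Combining the two inclusions yields $\Gamma = \bigcup_{t=0}^{q-1} \psi_t(\Gamma)$.

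Finally, each $\psi_t$ is an affine map of $\mathbb{R}^2$ with diagonal linear part $\mathrm{diag}(p_t, p_{\theta(t)})$, and since $p_t, p_{\theta(t)} \in (0,1)$ it is a contraction. Therefore $\{\psi_0, \psi_1, \dots, \psi_{q-1}\}$ is a (possibly non-similar) affine iterated function system, and the identity just proved says that $\Gamma$ is invariant under this IFS, i.e.\ $\Gamma$ is a self-affine set. The only subtle point I expect is Remark~\ref{Remark3}: a $P_q$-rational $x$ has two $P_q$-representations, so a priori one has to check that the decomposition $x = \beta_{i_1} + p_{i_1}\sigma(x)$ used in the second inclusion does not depend on the representation chosen. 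This is handled by adopting the convention of Remark~\ref{Remark3} (no period $(q-1)$), which makes $f$ well-defined and the digit $i_1$ unambiguous, so the argument goes through without change.
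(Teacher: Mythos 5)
Your proposal is correct and follows essentially the same route as the paper: both inclusions are derived directly from the functional equation $f(x)=\beta_{\theta(i_1)}+p_{\theta(i_1)}f(\sigma(x))$ together with the decomposition $x=\beta_{i_1}+p_{i_1}\sigma(x)$, exactly as in the paper's own argument. Your version is in fact slightly more careful, since you make explicit the identification of $p_t u+\beta_t$ with $\Delta^{P_q}_{t j_1 j_2\ldots}$ and note the role of the convention of Remark~\ref{Remark3} for $P_q$-rational points, which the paper leaves implicit.
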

\begin{proof} By analogy with~\cite{Symon2023}, we obtain the following. If $M(x_0, y_0)\in T\subset \Gamma$, then  $x_0=p_tx+\beta_t$ and $y_0=p_{\theta(t)} y+\beta_{\theta(t)}$ for some $t\in \{0, 1, \dots , q-1\}$. Using the system of functional equations~\eqref{eq: system}, we get
$$
f(x_0)=\beta_{\theta(t)}+p_{\theta(t)}f(x)=y_0
$$ 
and $M\in \Gamma$.

Choose  $M(x_0, f(x_0))\in \Gamma$. Then $x_0=\beta_t+p_t \sigma(x_0)$, $f(x_0)=\beta_{\theta(t)}+p_{\theta(t)}f(\sigma(x_0))$, and $(\sigma(x_0), f(\sigma(x_0)))\in \Gamma$.

So, $\psi_t(\sigma(x_0), f(\sigma(x_0)))=(x_0, f(x_0))\in T$.
\end{proof}

Let us consider fractal properties of the graph $\Gamma_f$ of the function $f$.

\begin{theorem}
Suppose $a=\min \{p_0, p_{\theta(0)}, \dots , p_{q-1}, p_{\theta(q-1)}\}$ and $b=\max \{p_0, p_{\theta(0)}, \dots , p_{q-1}, p_{\theta(q-1)}\}$. 

Then for a value  $\alpha_0$ of the Hausdorff dimension of the graph $\Gamma_f$ of the function $f$, the inequality $\alpha_{2}\le \alpha_0\le\alpha_{1}$ holds, where 
$$
\alpha_1=\inf\left\{\alpha: q\left(a\right)^{\alpha}<1\right\}
$$
and
$$
\alpha_2=\inf\left\{\alpha: q\left(b\right)^{\alpha}<1\right\}.
$$
\end{theorem}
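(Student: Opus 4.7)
The plan is to use the self-affine identity $\Gamma_f=\bigcup_{t=0}^{q-1}\psi_t(\Gamma_f)$ established in the preceding theorem. Iterating $n$ times yields the explicit cover
$\Gamma_f=\bigcup_{(t_1,\dots,t_n)}(\psi_{t_1}\circ\cdots\circ\psi_{t_n})(\Gamma_f)$
by $q^n$ affine images of $\Gamma_f$ indexed by $(t_1,\dots,t_n)\in\{0,\dots,q-1\}^n$. The two-sided estimate for $\dim_H\Gamma_f$ will come from estimating the diameters of these pieces and, respectively, the mass of balls against a natural measure supported on them.

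For the upper bound I would first observe that the composition $\psi_{t_1}\circ\cdots\circ\psi_{t_n}$ scales horizontal distances by $\prod_{k=1}^{n}p_{t_k}$ and vertical distances by $\prod_{k=1}^{n}p_{\theta(t_k)}$, both products bounded by $b^{n}$. Since $\Gamma_f\subset[0,1]^{2}$, every piece has diameter at most $\sqrt 2\,b^{n}$, and the corresponding Hausdorff pre-measure estimate $\mathcal H^{\alpha}_{\sqrt 2 b^{n}}(\Gamma_f)\le q^{n}(\sqrt 2\,b^{n})^{\alpha}=(\sqrt 2)^{\alpha}(qb^{\alpha})^{n}$ tends to zero as $n\to\infty$ whenever $qb^{\alpha}<1$. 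This pins $\dim_H\Gamma_f$ above by the quantity defined through $b$, namely $\alpha_{2}$.

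For the lower bound I would apply the mass distribution principle. Define a Borel probability measure $\mu$ on $\Gamma_f$ by assigning mass $q^{-n}$ to each level-$n$ piece; the compatibility $q^{-n}=q\cdot q^{-(n+1)}$ across levels makes this well defined. To estimate $\mu(B(x,r))$ I project onto the first coordinate: $\pi_x(B(x,r))$ is an interval of length at most $2r$, while the $x$-projections of the level-$n$ pieces are precisely the cylinders $\Lambda^{P_q}_{c_1\ldots c_n}$, which partition $[0,1]$ up to endpoints and have lengths $\prod_{k=1}^{n}p_{c_k}\ge a^{n}$. Choosing $n$ with $a^{n+1}<r\le a^{n}$, the ball meets at most three level-$n$ pieces through its $x$-projection, whence $\mu(B(x,r))\le 3q^{-n}\le C\,r^{\log q/\log(1/a)}=C\,r^{\alpha_{1}}$, and the mass distribution principle delivers $\dim_H\Gamma_f\ge\alpha_{1}$.

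The main obstacle is the lower bound, because the pieces $\psi_{t_1}\circ\cdots\circ\psi_{t_n}(\Gamma_f)$ are non-conformal affine images---typically long thin rectangles---to which the standard similarity-dimension machinery is inapplicable, and the full machinery of singular-value estimates for self-affine sets looks heavy in this context. The saving feature is that $\theta$ is merely a permutation of digits, so the $x$-projections of the level-$n$ pieces form a genuine $q$-adic partition of $[0,1]$ whose combinatorics are fully controlled by the $p_i$'s; this reduces the 2D ball-counting problem to the tractable 1D interval-counting problem exploited above.
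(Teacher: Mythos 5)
Your argument is correct in substance, and for the lower bound it takes a genuinely different (and more rigorous) route than the paper. For the upper bound both proofs do essentially the same thing: cover $\Gamma_f$ by the $q^n$ level-$n$ affine pieces, bound each diameter by $\sqrt{2}\,b^{n}$, and let the $\alpha$-sum $q^{n}(\sqrt{2}\,b^{n})^{\alpha}$ tend to zero. For the lower bound the paper argues by sandwiching a ``fractal cell entropy dimension'' quantity $\widehat{H}_{\alpha}(\Gamma_f)$ between the corresponding quantities for $q^{r}$ squares of sides $a^{r}$ and $b^{r}$; this is a box-counting-type heuristic, and such covering sums only ever give \emph{upper} bounds on Hausdorff dimension, so the paper does not really prove the lower estimate. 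Your mass-distribution argument --- push forward the uniform Bernoulli measure, project a ball to the $x$-axis, note that an interval of length $2r$ with $a^{n+1}<r\le a^{n}$ meets at most three rank-$n$ cylinders (each of length at least $a^{n}$ and mass $q^{-n}$), and conclude $\mu(B(x,r))\le Cr^{\log q/\log(1/a)}$ --- is the standard, correct way to obtain the lower bound, and the observation that $\theta$ being a permutation keeps the $x$-projections an honest $P_q$-cylinder partition is exactly the point that makes the non-conformality harmless. The construction of $\mu$ as $(\mathrm{id},f)_*$ of the $(1/q,\dots,1/q)$ measure on $[0,1]$ settles the well-definedness you gesture at.

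One discrepancy you should make explicit: you prove $\alpha_{1}\le\alpha_{0}\le\alpha_{2}$ (lower bound from $a$, upper bound from $b$), whereas the theorem asserts $\alpha_{2}\le\alpha_{0}\le\alpha_{1}$. Since $a\le b<1$ forces $\alpha_{1}=\log q/\log(1/a)\le\log q/\log(1/b)=\alpha_{2}$, the inequality as printed can only hold when $a=b$; the roles of $\alpha_1$ and $\alpha_2$ in the statement (and in the paper's concluding line) are evidently transposed, and your version is the one the covering and mass-distribution estimates actually support.
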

\begin{proof} Using the definition of $f$, one can see that the graph of this function belongs to $q$ rectangles (with sides $p_{i_1}$ and $p_{\theta(i_1)}$) from  $q^2$
first-rank rectangles:
$$
R_{[i_1, \theta(i_1)]}=\left[\beta_{i_1}, \beta_{i_1+1}\right]\times\left[\beta_{\theta(i_1)}, \beta_{\theta(i_1)+1}\right],
$$
where $0\le \beta_{i_1+1}, \beta_{\theta(i_1)+1}\le \beta_q=1$ and $i_1\in\{0, 1, \dots, q-1\}$.
The sum of all areas of these rectangles is equal to 
$$
L_1:=\sum^{q-1} _{m=0}{p_mp_{\theta(m)}}.
$$

The graph of  the function $f$ belongs to $q^2$ rectangles (with sides $p_{i_1}p_{i_2}$ and $p_{\theta(i_1)}p_{\theta(i_2)}$) from $q^4$ rectangles of rank $2$:
$$
R_{[i_1, \theta(i_1)][i_2,\theta(i_2)]}=\left[\beta_{i_1}+\beta_{i_2}p_{i_1}, \beta_{i_1}+\beta_{i_2+1}p_{i_1}\right] \times\left[\beta_{\theta(i_1)}+\beta_{\theta(i_2)}p_{\theta(i_1)}, \beta_{\theta(i_1)}+\beta_{\theta(i_2)+1}p_{\theta(i_1)}\right].
$$
In addition, one can note the following:
\begin{itemize}
\item the part of the graph, which is in the rectangle $R_{[0, \theta(0)]}$, belongs to $q$ rectangles  
$$
R_{[0,\theta(0)][0, \theta(0)]}, R_{[0, \theta(0)][(1, \theta(1)]}, \dots , R_{[0, \theta(0)][q-1, \theta(q-1)]};
$$
\item  the part of the graph, which is in the rectangle $R_{[1, \theta(1)]}$, belongs to $q$ rectangles  
$$
R_{[1,\theta(1)][0, \theta(0)]}, R_{[1, \theta(1)][(1, \theta(1)]}, \dots , R_{[1, \theta(1)][q-1, \theta(q-1)]};
$$
\item $\dots \dots \dots \dots \dots \dots \dots \dots \dots \dots \dots \dots \dots \dots \dots \dots \dots \dots \dots \dots \dots$
\item  the part of the graph, which is in the rectangle $R_{[q-1, \theta(q-1)]}$, belongs to $q$ rectangles  
$$
R_{[q-1,\theta(q-1)][0, \theta(0)]}, R_{[q-1, \theta(q-1)][(1, \theta(1)]}, \dots , R_{[q-1, \theta(q-1)][q-1, \theta(q-1)]}.
$$
\end{itemize}
The sum of all areas of these rectangles is equal to 
$$
L_1\sum^{q-1} _{m=0}{p_mp_{\theta(m)}}=L^2 _1.
$$

In the $r$th step, the graph of  the function $f$ belongs to $q^r$ rectangles (with sides $\prod^{r} _{t=1}{p_{i_t}}$ and $\prod^{r} _{t=1}{p_{\theta(i_t)}}$) from $q^2r$ rectangles of rank $r$. The sum of all areas of these rectangles is equal to $L^r _1$.

Whence,
$$
\widehat{H}_{\alpha} (\Gamma_f)=\lim_{\overline{r \to \infty}}{\sum_{j=\overline{1, r},~~ p_{i_j}\in  P}\left(\sqrt{\left(\prod^{r} _{j=1}{p_{i_j}}\right)^2+\left(\prod^{r} _{j=1}{p_{\theta(i_j)}}\right)^2}\right)^{\alpha^K (\Gamma_f)}},
$$
where $\alpha^K (E)$ is the fractal cell entropy dimension of the set $E$ (see \cite{Serbenyuk-2016} and references therein).

Suppose $a=\min \{p_0, p_{\theta(0)}, \dots , p_{q-1}, p_{\theta(q-1)}\}$ and $b=\max \{p_0, p_{\theta(0)}, \dots , p_{q-1}, p_{\theta(q-1)}\}$. The value of $\widehat{H}_{\alpha} (\Gamma_f)$ is between values of $\widehat{H}_{\alpha} (\Gamma_f)$ for the case of \emph{squares} of rank $r$ with sides $a^r$ and $b^r$ correspondently. Hence using the number of such squares and their diameters, let us evaluate the following values. Really, 
$$
\widehat{H}_{\alpha, a} (\Gamma_f)=\lim_{\overline{r \to \infty}}{q^r\left(\sqrt{(a^r)^2+(a^r)^2}\right)^{\alpha}}=\lim_{\overline{r \to \infty}}{q^r\left(\sqrt{2(a^r)^2}\right)^{\alpha}}=\lim_{\overline{r \to \infty}}\left(2^{\frac{\alpha}{2}}\left(qa^{\alpha}\right)^{r}\right) 
$$
and
$$
\widehat{H}_{\alpha, b} (\Gamma_f)=\lim_{\overline{r \to \infty}}{q^r\left(\sqrt{(b^r)^2+(b^r)^2}\right)^{\alpha}}=\lim_{\overline{r \to \infty}}{q^r\left(\sqrt{2(b^r)^2}\right)^{\alpha}}=\lim_{\overline{r \to \infty}}\left(2^{\frac{\alpha}{2}}\left(qb^{\alpha}\right)^{r}\right).
$$
Since the graph of our function has self-similar properties,  $\left(qa^{\alpha}\right)^{r}\to 0$ and$\left(qb^{\alpha}\right)^{r}\to 0$ for $r\to \infty$ and for large $\alpha>1$, we obtain $\alpha_2 \le \alpha_0(\Gamma_f)\le \alpha_1$, where $\alpha_0$ is the Hausdorff dimension, as well as 
$$
\alpha_1=\inf_{\alpha}\left\{\alpha: q\left(a\right)^{\alpha}<1\right\}
$$
and
$$
\alpha_2=\inf_{\alpha}\left\{\alpha: q\left(b\right)^{\alpha}<1\right\}.
$$

For example, for the case when $p_0=p_1=\dots =p_{q-1}=\frac 1 q$, we have $\alpha_1=\alpha_2=1$. Hence $\alpha_0(\Gamma_f)=1$.
\end{proof}

\section{Integral properties}

\begin{theorem}
For the Lebesgue integral, the following equality holds:
$$
\int^1 _0 {f(x)dx}=\frac{\sum^{q-1}_{t=0}{\beta_{\theta(t)}p_t}}{1-\sum^{q-1} _{t=0}{p_{\theta(t)}p_t}}.
$$
\end{theorem}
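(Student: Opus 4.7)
The plan is to exploit the self-similarity of $f$ encoded in the functional equation~\eqref{eq: system} of the preceding theorem, by decomposing $[0,1]$ into the first-rank cylinders $\Lambda^{P_q}_t = [\beta_t,\beta_t+p_t]$ for $t=0,1,\dots,q-1$, integrating on each piece via a change of variables induced by the shift, and then solving a single linear equation for the unknown $I := \int_0^1 f(x)\,dx$.

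First I would note that $f$ is bounded on $[0,1]$ (indeed $0\le f(x)\le 1$) and its discontinuities are confined to the countable set of $P_q$-rationals, so $f$ is Lebesgue integrable. Next, on each cylinder $\Lambda^{P_q}_t$ the shift operator restricts to the affine bijection $\sigma(x)=(x-\beta_t)/p_t$ onto $[0,1]$, and~\eqref{eq: system} specializes to $f(x)=\beta_{\theta(t)}+p_{\theta(t)}f(\sigma(x))$. Applying the substitution $u=\sigma(x)$, $dx=p_t\,du$, gives
$$
\int_{\Lambda^{P_q}_t} f(x)\,dx \;=\; p_t\int_0^1\bigl(\beta_{\theta(t)}+p_{\theta(t)}f(u)\bigr)\,du \;=\; \beta_{\theta(t)}p_t + p_{\theta(t)}p_t\,I.
$$

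Summing these $q$ equalities over $t=0,1,\dots,q-1$ and using that the cylinders $\Lambda^{P_q}_t$ form a partition of $[0,1]$ (up to the endpoints $\beta_t$, a finite set of measure zero) yields
$$
I \;=\; \sum_{t=0}^{q-1}\beta_{\theta(t)}p_t \;+\; I\sum_{t=0}^{q-1}p_{\theta(t)}p_t.
$$
Since $\sum_{t=0}^{q-1}p_{\theta(t)}p_t<\sum_{t=0}^{q-1}p_t=1$ (as each factor $p_{\theta(t)}$ is strictly less than $1$ and the weights $p_t$ are positive with sum $1$), the coefficient $1-\sum_t p_{\theta(t)}p_t$ is strictly positive, and solving for $I$ gives the announced formula.

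The only real subtlety is the justification of the partition step and the substitution: one must ensure that the individual integrals on the cylinders add up to the integral on $[0,1]$ despite the discontinuities at the $P_q$-rational endpoints. This is harmless because the set of such endpoints is countable, hence of Lebesgue measure zero; the shift $\sigma$, restricted to any $\Lambda^{P_q}_t$, is an affine $C^\infty$-bijection onto $[0,1]$, so the change-of-variables formula applies without obstruction. After that, the rest is a one-line algebraic manipulation, so I expect no further difficulty.
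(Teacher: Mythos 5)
Your proposal is correct and follows essentially the same route as the paper: split $[0,1]$ into the first-rank cylinders, apply the functional equation $f(x)=\beta_{\theta(t)}+p_{\theta(t)}f(\sigma(x))$ with the substitution $u=\sigma(x)$, $dx=p_t\,du$, and solve the resulting linear equation $I=\sum_t\beta_{\theta(t)}p_t+I\sum_t p_{\theta(t)}p_t$. Your added remarks on integrability, the measure-zero overlap of cylinder endpoints, and the strict positivity of $1-\sum_t p_{\theta(t)}p_t$ are sound refinements of the same argument.
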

\begin{proof}
 Let us begin with some equalities which are useful for the future calculations:
$$
x=\beta_{i_1}+p_{i_1}\sigma(x)
$$
and
$$
dx=p_{i_1}d(\sigma(x)),
$$
as well as
$$
d(\sigma^{n-1} (x))=p_{i_n}d(\sigma^{n} (x))
$$
for all $n=1, 2, 3, \dots$

Let us calculate the Lebesgue integral
$$
I:=\int^1 _0{f(x)dx}=\sum^{q-1} _{t=0}{\int^{\beta_{t+1}} _{\beta_t}{f(x)dx}}=\sum^{q-1} _{t=0}{\int^{\beta_{t+1}} _{\beta_t}{(\beta_{\theta(t)}+p_{\theta(t)}f(\sigma(x))})dx}
$$
$$
=\beta_{\theta(0)}p_0+\beta_{\theta(1)}p_1+\dots + \beta_{\theta(q-1)}p_{q-1}+\sum^{q-1} _{t=0}{\int^{\beta_{t+1}} _{\beta_t}{p_{\theta(t)}f(\sigma(x))dx}}
$$
$$
=\sum^{q-1}_{t=0}{\beta_{\theta(t)}p_t}+\sum^{q-1} _{t=0}{p_{\theta(t)}\int^{\beta_{t+1}} _{\beta_t}{p_tf(\sigma(x))d(\sigma(x))}}
$$
$$
=\sum^{q-1}_{t=0}{\beta_{\theta(t)}p_t}+\sum^{q-1} _{t=0}{p_{\theta(t)}p_t\int^{\beta_{t+1}} _{\beta_t}{f(\sigma(x))d(\sigma(x))}}.
$$

Using self-affine properties of $f$, we get
$$
I=\sum^{q-1}_{t=0}{\beta_{\theta(t)}p_t}+I\sum^{q-1} _{t=0}{p_{\theta(t)}p_t}.
$$

So,
$$
I=\frac{\sum^{q-1}_{t=0}{\beta_{\theta(t)}p_t}}{1-\sum^{q-1} _{t=0}{p_{\theta(t)}p_t}}.
$$
\end{proof}

\section*{Statements and Declarations}
\begin{center}
{\bf{Competing Interests}}

\emph{The author states that there is no conflict of interest}
\end{center}


\begin{thebibliography}{9}

\bibitem{ACFS2011}
E.~de Amo,  M.D.~Carrillo and  J.~Fern\'andez-S\'anchez, On duality of aggregation
operators and k-negations, \emph{Fuzzy Sets and Systems}, \textbf{181} (2011), 14--27.

\bibitem{ACFS2017}
E.~de Amo,  M.D.~Carrillo and  J.~Fern\'andez-S\'anchez, A Salem generalized function, \emph{Acta Math. Hungar. }  \textbf{151} (2017), no.~2,  361--378. https://doi.org/10.1007/s10474-017-0690-x



\bibitem{BK2000}
L. Berg and M. Kruppel, De Rham's singular function and related
functions, \emph{Z. Anal. Anwendungen.}, \textbf{19}(2000), no.~1,  227--237.


\bibitem{Bush1952}  K.~A.~Bush, Continuous functions without derivatives,  \emph{Amer. Math. Monthly} \textbf{59}   (1952),  222--225. 

\bibitem{Bunde1994} Bunde, A. and Havlin, S. \textsl{Fractals in Science}, Springer-Verlag, Berlin, 1994.

\bibitem{C1869}{ G. Cantor},
 {Ueber die einfachen Zahlensysteme,}
 {\itshape Z. Math. Phys.} {\bf 14} (1869), 121--128. (German) 

\bibitem{Falconer1997} {Falconer, K.} \textsl{Techniques in Fractal Geometry},
    John Willey and Sons, Ltd., Chichester,  1997.

\bibitem{Falconer2004} {Falconer, K.} \textsl{Fractal Geometry: Mathematical Foundations and Applications}, Wiley, 2004.

\bibitem{HW1979} 
 Hardy, G.H. and  Wright, E.M. \emph{An Introduction to the Theory of Numbers}, 5th ed., Oxford University
Press, Oxford, 1979.


\bibitem{IS2009}
S. Ito and T.  Sadahiro,  Beta-expansions with negative bases \emph{Integers}  \textbf{9}  (2009),   
   239-259.

    
\bibitem{Kawamura2010}
Kiko Kawamura, The derivative of Lebesgue's singular function, \emph{ Real Analysis Exchange}
Summer Symposium 2010, pp. 83--85.

\bibitem{Kruppel2009}
M. Kruppel, De Rham's singular function, its partial derivatives with
respect to the parameter and binary digital sums, \emph{Rostock. Math. Kolloq.}
\textbf{64} (2009), 57--74.

\bibitem{Mandelbrot1977} {Mandelbrot, B.} \textsl{Fractals: Form, Chance and Dimension}.
       W\!.\,H. Freeman and Co.,  San Francisco, Calif., 1977.

\bibitem{Mandelbrot1999} {Mandelbrot, B.} \textsl{The Fractal Geometry of Nature}.
    18th printing,  Freeman, New York, 1999.

\bibitem{Moran1946} {Moran, P.\!\,A.\,P\!.} {Additive functions of intervals and Hausdorff measure}. \emph{ Proc. Cambridge Philos. Soc.}  \textbf{42} (1946), no.\,1,15--23,
 doi:10.1017/S0305004100022684.




\bibitem{Minkowski}
H. Minkowski,  Zur Geometrie der Zahlen. In: Minkowski, H. (ed.) Gesammeine Abhandlungen, Band 2,
pp. 50--51. Druck und Verlag von B. G. Teubner, Leipzig und Berlin (1911)


\bibitem{OSS1995}
T. Okada, T. Sekiguchi, and Y. Shiota, An explicit formula of the
exponential sums of digital sums, \emph{Japan J. Indust. Appl. Math.} \textbf{ 12} (1995),
425--438.


\bibitem{PVB2011}  Parad\'is, J., Viader, P.,  Bibiloni, Ll. A New Singular Function. \emph{ Amer. Math. Monthly}, \textbf{ 118}, no. 4, 344-354 (2011) , DOI: 10.4169/amer.math.monthly.118.04.344


\bibitem{Renyi1957}
A. R\'enyi, Representations for real numbers and their ergodic properties, \emph{ Acta. Math. Acad. Sci. Hungar.} \textbf{8} (1957),  477--493.



\bibitem{Salem1943}{ R. Salem},
 {On some singular monotonic functions which are stricly increasing,}
 {\itshape Trans. Amer. Math. Soc.} {\bf 53} (1943), 423--439.

\bibitem{S. Serbenyuk abstract 6}{\itshape S. O. Serbenyuk},
 {On one nearly everywhere continuous and  nowhere differentiable function defined by automaton with finite memory,}
 {\itshape International Scientific Conference ``Asymptotic Methods in the Theory of Differential Equations" dedicated to 80th anniversary of M.~I.~Shkil:}  Abstracts, Kyiv: National Pedagogical Dragomanov University, 2012. --- P. 93 (Ukrainian), available at  https://www.researchgate.net/publication/311665377

\bibitem{Symon12(2)}{\itshape S. O. Serbenyuk},
 {On one nearly everywhere
continuous and nowhere differentiable function, that defined by automaton with 
finite memory,}
 {\itshape Naukovyi Chasopys NPU im. M. P. Dragomanova. Ser. 1. Phizyko-matematychni Nauky  [Trans. Natl. Pedagog. Mykhailo Dragomanov Univ. Ser. 1.
Phys. Math.]} {\bf 13(2)} (2012). (Ukrainian),  available at https://www.researchgate.net/publication/292970012

 \bibitem{S. Serbenyuk abstract 7}{\itshape S. O. Serbenyuk},
 {On one generalization of functions defined by automatons with finite memory,}
{\itshape Third Interuniversity Scientific Conference of Young Scientists on Mathematics and Physics:}   Abstracts, Kyiv: National University of Kyiv-Mohyla Academy, 2013. --- P. 112--113  (Ukrainian), available at https://www.researchgate.net/publication/311414454


\bibitem{S. Serbenyuk abstract 8}{\itshape S.  Serbenyuk},
 { On two functions with complicated local structure,}
 {\itshape Fifth International Conference on Analytic Number Theory and Spatial Tessellations:}   Abstracts, Kyiv: Institute of Mathematics of the National Academy of Sciences of Ukraine and Institute of Physics and Mathematics of the National Pedagogical Dragomanov University, 2013. --- P. 51--52, available at https://www.researchgate.net/publication/311414256



\bibitem{Symon2015}{\itshape S. O. Serbenyuk},
 {Functions, that defined by functional equations systems in terms of Cantor series representation of numbers,}
 {\itshape Naukovi Zapysky NaUKMA} {\bf 165} (2015), 34--40. (Ukrainian),  available at https://www.researchgate.net/publication/292606546

\bibitem{Symon2017}{ S. O. Serbenyuk},
 {Continuous Functions with Complicated Local
Structure Defined in Terms of Alternating Cantor
Series Representation of Numbers,}
 {\itshape Journal of Mathematical Physics, Analysis, Geometry} {\bf 13} (2017), No. 1,  57--81. 
https://doi.org/10.15407/mag13.01.057


 


\bibitem{Serbenyuk-2016}
S. Serbenyuk, On one class of functions with complicated local structure, \emph{\v{S}iauliai
Mathematical Seminar} \textbf {11 (19)} (2016), 75--88.

 

\bibitem{S.Serbenyuk 2017} S. Serbenyuk, On one fractal property of the Minkowski function, \emph{Revista de la Real Academia de Ciencias Exactas, F\' isicas y Naturales. Serie A. Matem\' aticas} \textbf{112} (2018), no.~2, 555--559, doi:10.1007/s13398-017-0396-5



 
\bibitem{S. Serbenyuk systemy rivnyan 2-2}{ S. O. Serbenyuk}
\textit{Non-Differentiable functions defined in~terms of~classical representations of~real numbers},
 Zh. Mat. Fiz. Anal. Geom. \textbf{14} (2018), no.~2,
 197--213. https://doi.org/10.15407/mag14.02.197




\bibitem{S.Serbenyuk} \emph{Serbenyuk S.} On some generalizations of real numbers representations, arXiv:1602.07929v1 (in Ukrainian)

\bibitem{preprint1-2018}
Symon Serbenyuk, Generalizations of certain representations of real numbers, \emph{Tatra Mountains Mathematical Publications} \textbf{77} (2020), 59--72. https://doi.org/10.2478/tmmp-2020-0033, 
arXiv:1801.10540.

\bibitem{Symon2019} Symon Serbenyuk, On one application of infinite systems of functional equations in function theory, \emph{ Tatra Mountains Mathematical Publications} \textbf{74} (2019), 117-144. https://doi.org/10.2478/tmmp-2019-0024  

\bibitem{preprint2019} Symon Serbenyuk, Generalized shift operator of certain encodings of real numbers, arXiv:1911.12140v1, 6 pp. 

\bibitem{preprint19} Symon Serbenyuk, On certain functions and related problems,  arXiv:1909.03163 

\bibitem{Symon2021} Symon Serbenyuk, Systems of functional equations and generalizations of certain functions, \emph{Aequationes  Mathematicae}   \textbf{95} (2021), 801-820. https://doi.org/10.1007/s00010-021-00840-8 

\bibitem{Symon2023} Symon Serbenyuk, One class of functions with arguments in negative bases, arXiv:2303.07867


\bibitem{Symon21} Symon Serbenyuk, Certain functions defined in terms of Cantor series, \emph{Journal of Mathematical Physics, Analysis, Geometry} { \bf{16}} (2020), no. 2, 174-189. https://doi.org/10.15407/mag16.02.174 

\bibitem{Symon21-1} Symon Serbenyuk. On certain maps defined by infinite sums. The Journal of Analysis 28 (2020), 987-1007. https://doi.org/10.1007/s41478-020-00229-x

\bibitem{Symon2023} Symon Serbenyuk, One modification of the Salem function, arXiv:2304.07776.  

\bibitem{sets1} Serbenyuk, S. Some Fractal Properties of Sets Having the Moran Structure. \emph{Tatra Mountains Mathematical Publications} \textbf{81}, no.1, 3922, pp.1-38. https://doi.org/10.2478/tmmp-2022-0001

\bibitem{sets2}  Serbenyuk, S. Certain Singular Distributions and Fractals. \emph{Tatra Mountains Mathematical Publications} \textbf{79}, no.2, 3921, pp.163-198. https://doi.org/10.2478/tmmp-2021-0026

\bibitem{sets} Serbenyuk, S.O. One distribution function on the Moran sets. \emph{ Azerb. J. Math.} \textbf{10}, no.2, 12--30 (2020), arXiv:1808.00395.

\bibitem{Sumi2009}
H. Sumi, Rational semigroups, random complex dynamics and singular
functions on the complex plane, \emph{ Sugaku}  \textbf{61} (2009), no.~2,  133--161.

\bibitem{Takayasu1984}
H. Takayasu, Physical models of fractal functions, \emph{Japan J. Appl. Math.}
\textbf{1} (1984), 201--205.

\bibitem{TAS1993}
S. Tasaki, I. Antoniou, and Z. Suchanecki, Deterministic diffusion,
De Rham equation and fractal eigenvectors, \emph{Physics Letter A} \textbf{179} 
(1993), no.~1,  97--102.

\bibitem{Zamfirescu1981} T. Zamfirescu, Most monotone functions are singular, \emph{Amer. Math. Mon.} \textbf{88} (1981), 47--49.

\bibitem{Wikipedia-pathology} Wikipedia Contributors, Pathological (mathematics), Wikipedia, the free encyclopedia, available as https://en.wikipedia.org/wiki/Pathological\_(mathematics) (July 24, 2023)
  

\end{thebibliography}
\end{document}